\documentclass[12pt]{amsart}
\usepackage{amsfonts,amsmath,enumerate}
\usepackage{amssymb,amscd,amsthm}
\usepackage[all]{xy}
\usepackage[pdftex]{graphicx}

\topmargin=0.1in
\textwidth6.15in
\textheight8.6in
\oddsidemargin=0.17in
\evensidemargin=0.17in

\theoremstyle{plain}
\newtheorem{theorem}[subsection]{Theorem}

\newtheorem{proposition}[subsection]{Proposition}
\newtheorem{corollary}[subsection]{Corollary}

\theoremstyle{definition}
\newtheorem{remark}[subsection]{Remark}
\newtheorem{definition}[subsection]{Definition}
\newtheorem{example}[subsection]{Example}

\newcommand{\A}{{\mathcal A}}
\newcommand{\C}{\mathbb{C}}
\newcommand{\Q}{\mathbb{Q}}


\numberwithin{equation}{section}

\bibliographystyle{alpha}

\begin{document}

\title [On Plane Curves with Double and Triple Points]
{On Plane Curves with Double and Triple Points}

\author[Nancy Abdallah]{Nancy Abdallah}
\address{Univ. Nice Sophia Antipolis, CNRS,  LJAD, UMR 7351, 06100 Nice, France.}
\email{Nancy.ABDALLAH@unice.fr}

\subjclass[2000]{Primary  32S35, 32S22; Secondary  14H50}

\keywords{plane curves, ordinary double and triple points, Milnor algebra, syzygies, Hodge and pole order filtrations }

\begin{abstract} 
We describe in simple geometric terms the Hodge filtration on the cohomology $H^*(U)$ of the complement $U=\mathbb{P}^2\setminus C$ of a plane curve $C$ with ordinary double and triple points. Relations to Milnor algebra, syzygies of the Jacobian ideal and pole order filtration on $H^2(U)$ are given.

\end{abstract}

\maketitle

\section{Introduction}

Let $S=\oplus_rS_r=\C[x,y,z]$ be the graded ring of polynomials with complex coefficients, where $S_r$ is the vector space of homogeneous polynomials of $S$ of degree $r$. For a homogeneous polynomial $f$ of degree $N$, define the Jacobian ideal of $f$ to be the ideal $J_f$ generated in $S$ by the partial derivatives $f_x,f_y,f_z$ of $f$ with respect to $x$, $y$ and $z$. The graded \textit{Milnor algebra} of $f$ is given by $$M(f)=\oplus_rM(f)_r=S/J_f.$$
The study of such Milnor algebras is related to the singularities of the projective curve $C\subset \mathbb{P}^2$ defined by $f=0$, see \cite{DCh}, as well as to the mixed Hodge theory of the curve $C$ and that of its complement $U=\mathbb{P}^2\setminus C$, see the foundational article by Griffiths \cite{Gr} and also \cite{Dimca2}, \cite{DS}, \cite{DSW} treating singular hypersurfaces in $\mathbb{P}^n$. For other relations with Algebraic Geometry see \cite{Se}. A key question is to relate the Hodge filtration $F$ to the pole order filtration $P$ (whose definition will be recalled below) on the cohomology group $H^2(U)$.

The Milnor algebra can be seen (up to a twist in grading) as the top cohomology $H^{3}(K^*(f))$ of the Koszul complex of $f_x,f_y$ and $f_z$ defined in section 3.

The aim of this paper is to generalize the results given by A. Dimca and G. Sticlaru in \cite{DSt} on nodal curves to curves whose singularities are nodes (alias ordinary double points $A_1$) and ordinary triple points $D_4$. In the case of nodal curves, by the work of Deligne \cite{Del}, one has the equality $F=P$ between the two filtrations on $H^2(U)$. In the case at hand, this equality no longer holds, and this explains why the results are more involved to state and to prove.

In section 2, we recall some basic facts on the mixed Hodge theory and  extend one of the main results of A. Dimca and G. Sticlaru in \cite{DSt} from the nodal to the double and triple points, see Theorem \ref{result1}. 

In section 3,  we give a brief introduction about syzygies of the Jacobian ideal of $f$ and the relation between Koszul complex cohomology and Hodge theory.

In the final section we state and prove the main result, which is an estimation of the dimensions of certain homogeneous components of the cohomology groups $H^{m}(K^*(f))$ in terms of simple geometrical invariants, like the number of double and triple points and the genera of the irreducible components of $C$, see Theorem \ref{result2}. 

As a consequence, we find examples of curves with only ordinary double and triple points for which $P^2H^2(U)\neq F^2H^2(U)$. The previous known examples involved curves with non-ordinary multiple points, see Examples 3.2, 3.3 and 3.4 in \cite{DSt}. We also discuss in Example \ref{Pappus} the two distinct realization of the Pappus configuration $9_3$ and point out another subtle difference between nodal curves and curves with  ordinary double and triple points.

\section{Hodge Theory of plane curve complements}

Let $X$ be an algebraic variety. Define $H^*_c(X,\C)$ to be the cohomology groups with compact support of $X$. If $X$ is smooth, then $H^m_c(X,\C)$ is dual to $H^{2n-m}(X,\C)$ where $n=\dim_{\C}X$, (see \cite[p.134]{Voisin}). Recall that the mixed Hodge numbers $h^{p,q}(H^s(X))$ are defined by 
$$h^{p,q}(H^s(X))=\dim Gr_F^pGr_{p+q}^WH^s(X,\mathbb{C}),$$ and we have 
\begin{equation}\label{hodge nb duality}
h^{p,q}(H^m(X,\C))=h^{n-q,n-p}(H^{2n-m}_c(X,\C))
\end{equation}
for every $p,q\leq n$.
Consider now  $U=\mathbb{P}^2\setminus C$, a smooth affine variety. By Deligne \cite{Del}, $H^*(U,\C)$ has a mixed Hodge structure. In particular, for the Hodge filtration $F$ on $H^2(U)$
one has
\begin{equation}\label{hodgefilt}
H^2(U)=F^0=F^1 \supset F^2 \supset F^3=0,
\end{equation}
where the second equality is proven in \cite[p.185]{Dbk}.\\

The dimensions of the associated graded groups $Gr_F^mH^2(U)$ with respect to the Hodge filtration are described in the following theorem, which is our first main result.

\begin{theorem}\label{result1}
Let $C\subset \mathbb{P}^2$ be a curve of degree $N$. Suppose that $C$ has  $n$ nodes and $t$ ordinary triple points as singularities and set $U=\mathbb{P}^2 \backslash C$. Let $C=\bigcup_{j=1,r}C_j$ be the decomposition of $C$ as a union of irreducible components, let $\nu_j: \tilde{C}_j \rightarrow C_j$ be the normalization mappings and let $g_j=g(\tilde{C}_j)$ be the corresponding genera. Then one has
$$\dim Gr^1_F H^2(U,\C)=\sum_{j=1}^r g_j$$
and $$\dim Gr^2_F H^2(U,\C)=\frac{(N-1)(N-2)}{2}-t.$$
\end{theorem}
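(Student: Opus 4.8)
The plan is to reduce the computation of the Hodge-graded pieces of $H^2(U)$ to the mixed Hodge structure on $H^1(C)$, and then to compute the latter from the normalization of $C$. First I would invoke Poincar\'e--Lefschetz duality in the form \eqref{hodge nb duality}: since $U$ is smooth of dimension $n=2$, one has $h^{p,q}(H^2(U))=h^{2-q,2-p}(H^2_c(U))$. To identify $H^2_c(U)$ I would use the long exact sequence of compactly supported cohomology associated with the closed embedding $C\hookrightarrow \mathbb{P}^2$ with open complement $U$,
$$\cdots \to H^{m-1}(C) \to H^m_c(U) \to H^m(\mathbb{P}^2) \to H^m(C) \to \cdots,$$
a sequence of morphisms of mixed Hodge structures. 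Since $H^1(\mathbb{P}^2)=0$ and the restriction $H^2(\mathbb{P}^2)=\C \to H^2(C)=\C^r$ is injective (the hyperplane class has positive degree on each component $C_j$), this yields an isomorphism of mixed Hodge structures $H^2_c(U)\cong H^1(C)$.

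Next I would compute the mixed Hodge structure on $H^1(C)$ from the normalization $\nu\colon \tilde{C}=\bigsqcup_j \tilde{C}_j \to C$. Writing $\Sigma$ for the $n+t$ singular points and $\tilde{\Sigma}=\nu^{-1}(\Sigma)$, the set $\tilde{\Sigma}$ has $2n+3t$ points (two branches through each node, three through each ordinary triple point). The Mayer--Vietoris sequence for the normalization square gives an exact sequence of mixed Hodge structures
$$0 \to H^0(C) \to H^0(\tilde{C})\oplus H^0(\Sigma) \xrightarrow{\phi} H^0(\tilde{\Sigma}) \to H^1(C) \to H^1(\tilde{C}) \to 0.$$
Since $C$ is connected, $H^0(C)=\C$ and $\dim\ker\phi=1$, so $\dim\operatorname{coker}\phi=(2n+3t)-(r+n+t-1)=n+2t-r+1$. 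This cokernel is $\operatorname{Gr}^W_0 H^1(C)$, pure of type $(0,0)$, while $H^1(\tilde{C})=\operatorname{Gr}^W_1 H^1(C)$ is pure of weight one with $h^{1,0}=h^{0,1}=\sum_j g_j$. Hence the only nonzero Hodge numbers of $H^1(C)$ are $h^{0,0}=n+2t-r+1$ and $h^{1,0}=h^{0,1}=\sum_j g_j$.

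Finally I would reassemble via duality. From $h^{p,q}(H^2(U))=h^{2-q,2-p}(H^1(C))$ the nonzero numbers are $h^{1,2}=\sum_j g_j$, $h^{2,1}=\sum_j g_j$, and $h^{2,2}=n+2t-r+1$. Summing over $q$ gives $\dim\operatorname{Gr}^1_F H^2(U)=h^{1,2}=\sum_j g_j$, which is the first assertion, and $\dim\operatorname{Gr}^2_F H^2(U)=h^{2,1}+h^{2,2}=\sum_j g_j+n+2t-r+1$. To finish I would rewrite this using the genus formula for the reduced connected curve $C$: comparing $\mathcal{O}_C$ with $\nu_*\mathcal{O}_{\tilde{C}}$ yields $p_a(C)=\sum_j g_j+\delta-r+1$, where $p_a(C)=\frac{(N-1)(N-2)}{2}$ and the total delta-invariant is $\delta=n+3t$ (a node contributes $1$, an ordinary triple point contributes $\binom{3}{2}=3$). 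Substituting $\sum_j g_j=\frac{(N-1)(N-2)}{2}-n-3t+r-1$ into the expression for $\operatorname{Gr}^2_F$ collapses it to $\frac{(N-1)(N-2)}{2}-t$, as claimed.

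I expect the main obstacle to be the correct determination of the weight-zero part $\operatorname{Gr}^W_0 H^1(C)$: one must set up the normalization sequence as a sequence of mixed Hodge structures, count the $2n+3t$ branches over the singular locus accurately, and keep track of which arrows are isomorphisms so that weight and Hodge data transfer faithfully through duality. The delta-invariant bookkeeping in the genus formula is routine once the singularity types are fixed, but it is essential that the triple points be \emph{ordinary} (so $\delta=3$) for the final cancellation to produce exactly $-t$.
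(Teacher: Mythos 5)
Your argument is correct, and the numbers check out: the cokernel of $\phi$ has dimension $n+2t-r+1$, and substituting $\sum_j g_j=p_a(C)-\delta+r-1$ with $\delta=n+3t$ does collapse $\sum_j g_j+n+2t-r+1$ to $\frac{(N-1)(N-2)}{2}-t$. However, your route is genuinely different from the paper's. The paper works with the Hodge--Deligne polynomial: it computes $P(C_j)$ for each irreducible component via its normalization, assembles $P(C)$ by inclusion--exclusion over the components (which forces some delicate bookkeeping of the triple points lying on two versus three components and an appeal to B\'ezout), sets $P(U)=P(\mathbb{P}^2)-P(C)$, and then sorts the monomials of $P(U)$ among $H^2_c$, $H^3_c$, $H^4_c$ using duality together with separate weight arguments ($W_1H^1(U)=0$ and $W_2H^2(U)=0$, the latter via a Chern class computation). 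You instead identify $H^2_c(U)\cong H^1(C)$ as a mixed Hodge structure from the long exact sequence of the pair, compute $H^1(C)$ globally from the normalization Mayer--Vietoris sequence, and convert with the $\delta$-genus formula $p_a(C)=\sum_j g_j+\delta-r+1$ rather than the per-component formula $g_j+n_j+3t_j=p_a(C_j)$ used in the paper. Your approach buys several things: the vanishing of the types $(2,0)$, $(1,1)$, $(0,2)$ on $H^2(U)$ falls out automatically (since $H^1(C)$ has only weights $0$ and $1$) instead of requiring the Chern class argument, you avoid the inclusion--exclusion and the auxiliary counts $s$ and $t'$, and you obtain the individual mixed Hodge numbers of Corollary \ref{MHnumber} directly rather than only alternating sums. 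The paper's E-polynomial method is more mechanical and generalizes readily to other singularity configurations where the component-by-component count is the natural datum. The only points worth making explicit in your write-up are the connectedness of $C$ (needed for $H^0(C)=\C$; automatic for a plane curve by B\'ezout) and the fact that the normalization square does give an exact sequence of mixed Hodge structures.
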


\begin{proof}
Suppose that the curve $C_j: f_j=0$ has degree $N_j$, and has $n_j$ nodes and $t_j$ triple points. Recall the definition of the Hodge-Deligne polynomial of a quasi-projective complex variety $X$
$$P(X)(u,v)=\sum_{p,q} E^{p,q}(X)u^pv^q$$
where $E^{p,q}(X)=\sum_s (-1)^s\dim Gr_F^pGr_{p+q}^WH^s_c(X,\mathbb{C})$, and the fact that it is additive with respect to constructible partitions, i.e. $P(X)=P(X\setminus Y)+P(Y)$ for a closed subvariety $Y$ of $X$.\\
Using the normalization maps $\nu_j$, we have
\begin{eqnarray*}
P(C_j)
&=&P(C_j\backslash(C_j)_{sing})+P((C_j)_{sing})=P(\tilde{C_j}\backslash\{(2n_j+3t_j)\ points\})+n_j+t_j\\
&=&P(\tilde{C_j})-P(\{2n_j+3t_j\ points\})+n_j+t_j\\
&=& uv-g_ju-g_jv+1-n_j-2t_j.
\end{eqnarray*}
Indeed, it is known that for a smooth curve $C$, the genus $g(C)$ is exactly the Hodge number $h^{1,0}(X)=h^{0,1}(X)$. Now,
\begin{eqnarray*}
P(C)&=&P(C_1\cup\cdots\cup C_r)\\
&=&\sum_{j=1}^r P(C_j)-\sum_{1\leq i<j \leq r}P(C_i\cap C_j)+\sum_{1\leq i<j<k \leq r}P(C_i\cap C_j \cap C_k)\\
&=&ruv - \left(\sum_{j=1}^r g_j\right)u - \left(\sum_{j=1}^r g_j\right)v+r - \sum_{j=1}^r (n_j+2t_j)- (\sum_{1\leq i<j \leq r}N_iN_j-s)+t'.
\end{eqnarray*}
where $s$ (respectively $t'$) denotes the total number of triple points which are intersection of only two (respectively three) curves. The term $\sum_{1\leq i<j \leq r}N_iN_j-s$ is a result of B\'ezout's Theorem. Indeed, the total number of intersection points counted with multiplicities is $\sum_{1\leq i<j \leq r}N_iN_j$, and it is easy to see that the intersection multiplicity of each of the $s$ triple points is $2$, i.e., each one is counted twice.
Next we have by the additivity, $P(U)=P(\mathbb{P}^2)-P(C)$, where $P(\mathbb{P}^2)=u^2v^2+uv+1$. Let's look at the cohomology of the smooth surface $U$. The group $H^4_c(U,\C)$ is dual to the group $H^0(U,\C)$, which is 1-dimensional of type (0,0). It follows that $H^4_c(U,\C)$ is 1-dimensional of type (2,2) and its contribution to $P(U)$ is exactly the term $u^2v^2$.\newline
The group $H^3_c(U,\C)$ is dual to the group $H^1(U,\C)$, which is $(r-1)$-dimensional of type (1,1). Indeed, by Theorem (C.24) in \cite{Dbk}, the only nonzero weights on $H^1(U,\C)$ are $m=1$ and $m=2$. But $W_1$ is zero since $W_1=j^*(H^1(\mathbb{P}^2))=0$ where $j:U\longrightarrow \mathbb{P}^2$ is the compactification of $U$ and we apply again Theorem (C.24) in \cite{Dbk}. 
It follows that the contribution of
$H^3_c(U,\C)$ to $P(U)$ is exactly the term $-(r-1)uv$.\newline
The remaining terms come from the group $H^2_c(U,\C)$, which is dual to the group $H^2(U,\C)$. By theorem (C.24) in \cite{Dbk}, the only nonzero weights on $H^2(U,\Q)$ are $m=2,\ 3$ and  $4$. On the other hand, $W_2=0$, since $W_2=j^*H^2(\mathbb{P}^2)$. But $H^2(\mathbb{P}^2)=\alpha\Q$, where $\alpha=c_1(L)$, $c_1$ denotes the Chern class of $L=O(1)$. Therefore, by the naturality property of Chern classes (see \cite{Bott & Tu}), $N\alpha=j^*(c_1(L^{\otimes N}))=c_1(j^*(L^{\otimes N}))=c_1(L^{\otimes N}\vert U)$ which is equal to zero since it has a nowhere vanishing section given by $f$, the defining equation of $C$. This implies that $H^2(U,\C)$ has only classes of type $(2,1)$, $(1,2)$, and $(2,2)$. Therefore the dimension $\dim Gr^1_FH^2(U,\C)$ is the number of independent classes of type (1,2), which correspond to classes of type $(1,0)$ in $H^2_c(U)$, according to equation \ref{hodge nb duality}, 
and hence to the terms in $u$ in $P(U)$. This gives the first equality.\newline
Now the dimension $\dim Gr^2_FH^2(U,\C)$ is the number of independent classes of type (2,1) or (2,2), which correspond respectively to the terms in $v$ or the constant terms in the polynomial $P(U)$. This yields
$$\dim Gr^2_FH^2(U,\C)=\sum_{j=1}^r{(g_j+n_j+3t_j-1)}+\sum_{1\leq i<j \leq r}N_iN_j +1 - (\sum_{j=1}^r t_j+s+t').$$
Recall the formula $$g_j+n_j+3t_j=p_a(C_j)=\frac{(N_j-1)(N_j-2)}{2},$$
where $p_a$ denotes the arithmetic genus, see \cite[ p.298 and p.54]{Hartshorne}. Knowing that the total number of triple points $t=\sum_{j=1}^r t_j+s+t'$, and using the fact that $N=\sum_{j=1}^r N_j$, with some computations, we get:

\begin{eqnarray*}
\dim Gr^2_FH^2(U,\C)&=&\sum_{j=1}^r{\left(\frac{(N_j-1)(N_j-2)}{2}-1\right)}+\sum_{1\leq i<j \leq r}N_iN_j +1 -t\\
&=&\frac{(N-1)(N-2)}{2}-t.
\end{eqnarray*}
\end{proof}

\begin{remark}\label{rk1}
In the case of nodal cuvres, i.e. for $t=0$, the above theorem was already proved by A. Dimca and G. Sticlaru in \cite{DSt2}.
\end{remark}
\begin{remark}\label{rk1} As shown in the proof above, we have $W_2H^2(U,\C)=0$, and this implies
$$h^{2,0}(H^2(U))=h^{1,1}(H^2(U))=h^{0,2}(H^2(U))=0.$$

\end{remark}
This last remark implies the following consequence.

\begin{corollary}\label{MHnumber} 
With the above notation and assumptions, we have the following.

\begin{enumerate}[(i)]

\item $h^{2,1}(H^2(U))=h^{1,2}(H^2(U))=\sum_{j=1}^r g_j.$

\item  $h^{2,2}(H^2(U))=\frac{(N-1)(N-2)}{2} -\sum_{j=1}^r g_j  -t.$

\item  $b_2(U)=\frac{(N-1)(N-2)}{2}+\sum_{j=1}^r g_j-t,$
where $b_2(U)$ denotes the second Betti number of the complement $U$.

\end{enumerate}
\end{corollary}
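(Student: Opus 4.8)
The plan is to read off all three mixed Hodge numbers directly from the two dimension formulas in Theorem \ref{result1}, using the vanishing recorded in Remark \ref{rk1} together with the Hodge symmetry of the weight-graded pieces. The crucial input, already extracted in the proof of Theorem \ref{result1}, is that the only nonzero Hodge types on $H^2(U,\C)$ are $(2,1)$, $(1,2)$, and $(2,2)$; equivalently $h^{2,0}=h^{1,1}=h^{0,2}=0$. Everything else is bookkeeping.

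First I would match the graded pieces of the Hodge filtration against these three surviving types. Since $Gr^1_F H^2(U)$ collects exactly the classes with $p=1$, and the only such type is $(1,2)$, one has $\dim Gr^1_F H^2(U,\C)=h^{1,2}(H^2(U))$, so the first formula of Theorem \ref{result1} gives $h^{1,2}=\sum_{j=1}^r g_j$. Similarly, $Gr^2_F H^2(U)$ collects the classes with $p=2$, namely the types $(2,1)$ and $(2,2)$, whence $h^{2,1}+h^{2,2}=\dim Gr^2_F H^2(U,\C)=\frac{(N-1)(N-2)}{2}-t$ by the second formula.

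To finish (i), I would invoke the Hodge symmetry $h^{p,q}=h^{q,p}$ on the weight-$3$ graded piece, which yields $h^{2,1}=h^{1,2}=\sum_{j=1}^r g_j$. Substituting this value into the relation $h^{2,1}+h^{2,2}=\frac{(N-1)(N-2)}{2}-t$ immediately gives (ii), and summing the three nonzero Hodge numbers through $b_2(U)=h^{2,1}+h^{1,2}+h^{2,2}$ produces (iii) after a one-line cancellation.

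I do not expect a genuine obstacle: the entire geometric content has already been absorbed into Theorem \ref{result1} and Remark \ref{rk1}, so the corollary is a purely formal consequence. The only point that deserves a word of care is the appeal to Hodge symmetry --- it is legitimate because symmetry applies to the \emph{pure} Hodge structure carried by each graded piece $Gr^W_m H^2(U)$, not to the mixed structure as a whole --- but this is a standard feature of Deligne's canonical mixed Hodge structure and requires no further argument here.
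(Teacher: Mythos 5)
Your proposal is correct and is essentially the argument the paper intends: the corollary is stated as an immediate consequence of Remark \ref{rk1} (the vanishing $h^{2,0}=h^{1,1}=h^{0,2}=0$) combined with the two dimension formulas of Theorem \ref{result1}, exactly as you lay out. The appeal to Hodge symmetry on the weight-graded pieces is legitimate and is also implicit in the paper's own computation, where the coefficients of $u$ and $v$ in $P(U)$ coincide.
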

In particular, it follows that $H^2(U)$ is pure of type $(2,2)$ when $g_j=0$ for all $j$, a well known property in the case of line arrangements.

\begin{example}
Let $C:\ (x^2-y^2)(y^2-z^2)(x^2-z^2)=0$. $C$ is the union of $6$ lines in $\mathbb{P}^2$. It has $4$ triple points. We have $g_i=0$ for $i=1,\cdots 6$, $N=6$, and $t=4$. Then according to the aforementioned theorem (\ref{result1}) we get $\dim Gr^1_F H^2(U,\C)=\dim \frac{F^1}{F^2}=0$ and $\dim Gr^2_F H^2(U,\C)=\dim F^2=6$. Hence, $b_2(U)=6$.
\end{example}
\begin{example} Let $C:\ (x^3-y^3)(y^3-z^3)(x^3-z^3)=0$. $C$ is the union of $9$ lines. It has $12$ triple points and no nodes. We have $g_i=0$ for every $i=1,\cdots 6$, $N=9$, and $t=12$. Then $\dim Gr^1_F H^2(U,\C)=0$ and $\dim Gr^2_F H^2(U,\C)=16=b_2(U)$.
\end{example}
\begin{example}
Let $C:\ xyz(x^2y+x^2z+y^2x+y^2z+z^2x+z^2y)=0$. $C$ is the union of $3$ lines giving rise to a triangle and a smooth cubic curve. It has $3$ triple points (the vertices of the triangle) and $3$ nodes. We have $g_1=g_2=g_3=0$, $g_4=1$, $N=6$, and $t=3$. Then $\dim Gr^1_F H^2(U,\C)=1$, $\dim Gr^2_F H^2(U,\C)=7$, and $b_2(U)=1+7=8$.
\end{example}

\section{Koszul complexes, syzygies and spectral sequences for reduced plane curves}

Let $K^*(f)$ be the Koszul complex of the partial derivatives $f_x,f_y,f_z$ of $f$ with the natural grading $|x|=|dx|=1$ defined by
$$0\rightarrow\Omega^0\xrightarrow{df\wedge} \Omega ^1\xrightarrow{df\wedge}\Omega ^2\xrightarrow{df\wedge}\Omega ^3\rightarrow 0$$
where $df=f_xdx+f_ydy+f_zdz$, and $\Omega^k$ denotes the global polynomial $k$-forms on $\C^3$.\\
It is easy to see that $H^3(K^*(f))_k=M(f)_{k-3}$. On the other hand, the homogeneous components of $H^2(K^*(f))_{2+m}$ are the syzygies $$R_m:\;\; af_x+bf_y+cf_z=0,$$ where $a,b,c\in S_m$, modulo the trivial syzygies generated by $(f_j)f_i+(-f_i)f_j=0$, where $f_i$ and $f_j$ denote $f_x,f_y$ or $f_z$. Denote by $ER(f)$ the set of these relations, called essential relations, or nontrivial relations.

By Theorem 3.1 in \cite{Dimca}, we have
\begin{equation}\label{ER}
\dim ER(f)_{k-2}=\dim H^2(K^*(f))_{k}
\end{equation} 
for any $2\leq k\leq 2N-3$, and $\dim ER(f)_{k-2}=\tau(C)$ for $k\geq 2N-4$. Here $\tau(C)$ denotes the sum of the Tjurina numbers of the singularities of $C:f=0$. For instance, if $C$ has $n$ $A_1$-singularities and $t$ $D_4$-singularities, then $\tau(C)=n+4t$. We also have, 
\begin{equation}
\dim H^2(K^*(f))_{2N-3-k}=\dim M(f)_{3N-6-k}-\dim M(f_s)_k
\end{equation}

 Recall the following integers introduced (for hypersurfaces in $\mathbb{P}^n$) in \cite{DSt}:
\begin{definition} For a plane curve $C:f=0$ of degree $N$ with isolated singularities we set
\begin{enumerate}[(i)]
\item the \textit{coincidence threshold ct(C)} defined as $$ct(C)=max\{q:\ \dim M(f)_k=\dim M(f_s)_k\ for\ all\ k\leq q\},$$ with $f_s$ a homogeneous polynomial in $S$ of degree $N$ such that $C_s$: $f_s=0$ is a smooth curve in $\mathbb{P}^2$.
\item the \textit{stability threshold st(C)} defined as $$ st(C)=min\{q: \dim M(f)_k=\tau(C)\ for\ all\ k\geq q\}.$$
 \item the \textit{minimal degree of nontrivial syzygy mdr(C)} defined as $$mdr(C)=min\{q:H^2(K^*(f))_{q+2}\neq0\}.$$
\end{enumerate}
It is easy to see that $ct(C)=mdr(C)+N-2$. By Proposition 1 in \cite{DCh} we have $N-2\leq ct(C)\leq 3(N-2)$ and by Theorem 3 in \cite{DCh} we have $st(C)\leq 3N-5$.
\end{definition}

\begin{example}
Let $C:$ $f=x^py^q+z^N$ with $p>0$, $q>0$ and $p+q=N>2$. It is easy to see that $qxf_x-pyf_y=0$. Therefore, the first nontrivial syzygy is of degree one. Hence $\dim ER(f)_1=\dim H^2(K^*(f))_{3}\neq 0$. Then $mdr(C)=1$ which yields  $ct(C)=N-1$. 
\end{example}

Suppose now that $C$ has only nodes as singularities, i.e. singularities of type $A_1$, and let $C=\bigcup_{j=1,r}C_j$ be the decomposition of $C$ as a union of irreducible components. In this case we have $ct(C)\geq 2N-4$, see Theorem 1.2 in \cite{DSt}. Therefore, the dimensions of $M(f)_q$ are all determined for $q<2N-3$. The next dimension is given by
\begin{equation}\label{eqt}
\dim M(f)_{2N-3}=n(C)+\sum_{j=1}^r g_j=g+r-1
\end{equation}
where $n(C)$ is the number of nodes of $C$ and $$g=\frac{(N-1)(N-2)}{2}.$$ If $C$ is a rational curve, i.e. $g_i=0$ for $i=0,\cdots,r$, then $\dim M(f)_{2N-3}=n(C)=\tau(C)$ and therefore $st(C)\leq 2N-3$. We recall the following corollary in \cite{DSt}.
\begin{corollary}\label{rational nodal curves} For a rational nodal curve $C$, the Hilbert-Poincar\'e series 
$$HP(M(f))(t)= \sum_r \dim M(f)_rt^r$$
 is completely determined by the degree $N$ and the number of nodes $n(C)$. In particular, $st(C)=2N-3$ unless $C$ is a generic line arrangement and then $st(C)=2N-4$.
\end{corollary}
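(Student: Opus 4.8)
The plan is to compute the Hilbert function $r\mapsto\dim M(f)_r$ on the three consecutive ranges $r<2N-3$, $r=2N-3$, and $r>2N-3$, and to observe that on each range the value is forced by $N$ and $n(C)$ alone. First, since $C$ is nodal we have the bound $ct(C)\ge 2N-4$ recalled above, so $\dim M(f)_r=\dim M(f_s)_r$ for all $r\le 2N-4$, where $f_s$ defines a smooth curve of degree $N$. The Jacobian ideal $J_{f_s}$ is generated by a regular sequence of three forms of degree $N-1$, hence $M(f_s)$ is a graded complete intersection with Hilbert--Poincar\'e series $\left(\tfrac{1-t^{N-1}}{1-t}\right)^3$, whose coefficients depend only on $N$. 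Next, for $r=2N-3$ equation \eqref{eqt} together with $g_j=0$ for all $j$ gives $\dim M(f)_{2N-3}=n(C)$. Finally, from $\dim M(f)_{2N-3}=n(C)=\tau(C)$ we get $st(C)\le 2N-3$, so $\dim M(f)_r=\tau(C)=n(C)$ for every $r\ge 2N-3$. Assembling the three ranges shows that $HP(M(f))(t)$ is completely determined by $N$ and $n(C)$, which is the first assertion.

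For the statement on $st(C)$ the crucial quantity is $\dim M(f)_{2N-4}$. Since $M(f_s)$ is Artinian Gorenstein with socle degree $3N-6$, the symmetry $\dim M(f_s)_k=\dim M(f_s)_{3N-6-k}$ yields $\dim M(f)_{2N-4}=\dim M(f_s)_{2N-4}=\dim M(f_s)_{N-2}=\binom{N}{2}$, the last equality because $J_{f_s}$ vanishes in degrees below $N-1$. On the other hand \eqref{eqt} with $g_j=0$ gives $n(C)=\tfrac{(N-1)(N-2)}{2}+r-1$, so that $\dim M(f)_{2N-4}-\tau(C)=\binom{N}{2}-n(C)=N-r\ge 0$, with equality exactly when $r=N$. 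Because the degrees $r\ge 2N-3$ are already stable, the condition $st(C)\le 2N-4$ is equivalent to $\dim M(f)_{2N-4}=\tau(C)$, i.e. to $r=N$. When $r<N$ this fails, so $st(C)>2N-4$ and hence $st(C)=2N-3$. When $r=N$, the identity $\sum_j N_j=N$ with $r=N$ summands forces every component to be a line, and a nodal line arrangement has no three concurrent lines (a triple point would otherwise appear); conversely a generic line arrangement is nodal with $r=N$. This identifies the exceptional case as precisely the generic line arrangement.

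It remains to show that in the generic line arrangement case $st(C)$ equals $2N-4$ and is not smaller, for which I would verify $\dim M(f)_{2N-5}>\tau(C)$. By the same Gorenstein duality, $\dim M(f)_{2N-5}=\dim M(f_s)_{N-1}=\binom{N+1}{2}-3$, and subtracting $\tau(C)=\binom{N}{2}$ leaves $N-3$, which is positive for $N\ge 4$; thus $st(C)=2N-4$ for $N\ge 4$, the triangle $N=3$ being a degenerate case to be settled by direct computation. The main obstacle is exactly this final dichotomy: the determination of $HP(M(f))$ is essentially bookkeeping built on the thresholds recalled above, whereas isolating the generic line arrangement requires combining the Gorenstein symmetry of $M(f_s)$ with the combinatorial identity $\binom{N}{2}-n(C)=N-r$ from \eqref{eqt}, and then the geometric observation that under the nodal hypothesis ``$N$ components'' can only mean a generic line arrangement.
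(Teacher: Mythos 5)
Your proposal is correct, and it actually supplies more than the paper does: the paper does not prove this corollary but recalls it from \cite{DSt}, offering only the two ingredients in the preceding paragraph, namely the bound $ct(C)\geq 2N-4$ (which pins down $\dim M(f)_k$ for $k\leq 2N-4$ as the coefficients of $\bigl(\tfrac{1-t^{N-1}}{1-t}\bigr)^3$) and the formula \eqref{eqt}, which for $g_j=0$ gives $\dim M(f)_{2N-3}=n(C)=\tau(C)$ and hence $st(C)\leq 2N-3$. Your first paragraph reproduces exactly this route. Where you go beyond the paper is in the dichotomy for $st(C)$: the Gorenstein symmetry $\dim M(f_s)_k=\dim M(f_s)_{3N-6-k}$ combined with \eqref{eqt} to get $\dim M(f)_{2N-4}-\tau(C)=\binom{N}{2}-n(C)=N-r$, the observation that $r=N$ forces all components to be lines and nodality forces the arrangement to be generic, and the check in degree $2N-5$ that $st(C)$ is not smaller than $2N-4$ in the generic case. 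All of these computations are correct, and this is a clean, self-contained way to obtain the ``unless'' clause that the paper simply imports.

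Two small caveats, neither of which I would count as a gap relative to the paper. First, both you and the paper pass from ``$\dim M(f)_{2N-3}=\tau(C)$'' to ``$\dim M(f)_k=\tau(C)$ for all $k\geq 2N-3$'' without comment; this needs the standard fact (from the defect formula for $\dim M(f)_k$ in \cite{Dimca}, or equivalently from \eqref{ER} together with the stabilization of $\dim ER(f)_{k-2}$ at $\tau(C)$) that in this range $\dim M(f)_k\geq\tau(C)$ and the excess is non-increasing; a one-line citation would close this. Second, your $N=3$ ``degenerate case'' is genuinely degenerate: for the triangle one computes $st(C)=1<2N-4$, so the clause $st(C)=2N-4$ should be read with $N\geq 4$ understood; flagging this explicitly, as you do, is appropriate.
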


\begin{example}
Let $C$ be the degree 4 curve defined by $f=x(x^3+y^3+z^3)$. Then $C$ has 3 collinear nodes. $st(C)\leq 3N-5=7$ and $ct(C)\geq 2N-4=4$. Indeed a computation using Singular \cite{Singular} yields the following Hilbert-Poincar\'e series
$$HP(M(f))(t)=1+3t+6t^2+7t^3+6t^4+4t^5+3(t^6+t^7+\cdots)$$
and hence $ct(C)=4$ and $st(C)=6$.
\end{example}

\begin{example}
Let $C$ be a generic line arrangement defined by $f=xyz(x+y+z)=0$. Then $C$ has $6$ nodes, by Corollory \ref{rational nodal curves}, $HP(M(f))$ is all determined and we have $st(C)=2N-4=4$ and $ct(C)\geq 4$. Therefore $$HP(M(f))(t)=1+3t+6t^2+7t^3+6(t^4+t^5+\cdots),$$
which implies $ct(C)=4$.
\end{example}

Consider now the double complex $(B,d',d'')$ defined by 
$$B^{s,t}=\Omega_{(t+1)N}^{s+t}\;\;\;\; s,t\in \mathbb{Z},$$
$d'=d$, and $d''(\omega)=-|\omega|N^{-1}df\wedge \omega$ for a homogeneous differential form $\omega$. 
Let $(B^*,D_f)$ be the associated total complex, namely, $B^k=\bigoplus_{s+t=k}B^{s,t}$, and $D_f=d'+d''$ with $d'd''+d''d'=0$. Define a decreasing filtration on $B^*$ by $F^pB^k=\bigoplus_{\substack{s\geq p\\ s+t=k}} B^{s,t}$.
With this notation, we have the following result, see \cite[Chapter 6]{Dbk}
\begin{proposition}\label{specsq}
There exists an $E_1$-spectral sequence $(E_r,d_r)$ converging to $H^{p+q-1}(U)$ such that $$E_1^{p,q}(f)=H^{p+q}(K^*(f))_{(q+1)N}.$$
Moreover, the filtration induced by this spectral sequence on $H^*(U)$ coincides with the pole order filtration $P$.
\end{proposition}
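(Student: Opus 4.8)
The plan is to identify the total complex $(B^*, D_f)$, up to a shift of one in degree, with the complex of global algebraic forms computing $H^*(U)$, and to match the column filtration $F$ with the pole order filtration. Let $E = x\partial_x + y\partial_y + z\partial_z$ be the Euler vector field and $\Delta = \iota_E$ contraction with $E$, which lowers form degree by one while preserving total degree (under $|x| = |dx| = 1$). First I would write down the comparison map sending $\omega \in B^{s,t} = \Omega^{s+t}_{(t+1)N}$ to the rational $(s+t-1)$-form
$$\rho(\omega) = c_{s,t}\,\frac{\Delta\omega}{f^{t+1}}$$
on $\mathbb{P}^2$, where the $c_{s,t}$ are normalizing constants fixed below. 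Since $\deg\omega = (t+1)N$, the form $\rho(\omega)$ is homogeneous of degree $0$ and horizontal ($\iota_E\rho(\omega)=0$, as $\iota_E^2=0$), hence basic, so it descends to a rational form on $\mathbb{P}^2$ with pole of order at most $t+1$ along $C$. This is where the shift from total degree $p+q$ to $H^{p+q-1}(U)$ originates.

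The chain-map property is a direct computation using Cartan's formula $\mathcal{L}_E = d\iota_E + \iota_E d$, the identity $\mathcal{L}_E\omega = (\deg\omega)\,\omega$ for homogeneous $\omega$, and Euler's identity $\iota_E(df) = E(f) = Nf$. Differentiating $\rho(\omega)$ and substituting $\Delta(d\omega)$ and $\Delta(df\wedge\omega)$ shows that $d\rho(\omega)$ is a combination of the images of $d'\omega = d\omega$ (pole order preserved, bidegree $(s+1,t)$) and of $df\wedge\omega$ (pole order raised, bidegree $(s,t+1)$). The factor $-|\omega|N^{-1}$ built into $d''$, together with a suitable choice of the $c_{s,t}$, is exactly what makes $d\circ\rho = \rho\circ D_f$, so that $\rho$ is a morphism of complexes. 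Under $\rho$ the column index $s$ controls the pole order: the piece $F^pB^k$ (with $s\ge p$, equivalently $t\le k-p$) maps to forms of pole order at most $k-p+1$, which is precisely the defining feature of the pole order filtration $P$.

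Next I would compute the spectral sequence of $F$, the easy and formal step. Checking bidegrees, $d'=d$ raises $s$ by one and fixes $t$, while $d''$ fixes $s$ and raises $t$ by one; hence the only part of $D_f$ preserving $s$ is $d''$, so the $E_0$-differential is $d''$. As $d''$ equals the Koszul differential $df\wedge$ up to the nonzero scalar $-|\omega|N^{-1}$, taking cohomology along each column yields
$$E_1^{p,q}(f) = H^{p+q}(K^*(f))_{(q+1)N},$$
which is the first assertion.

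The remaining, and genuinely hard, point is that the spectral sequence converges to $H^{p+q-1}(U)$ with abutment filtration equal to $P$; equivalently, that $\rho$ is a filtered quasi-isomorphism. Here I would invoke Grothendieck's algebraic de Rham theorem to identify the complex of global rational forms on $\mathbb{P}^2$ with poles along $C$, filtered by pole order, as a model for $H^*(U)$ carrying the filtration $P$, and then prove that $\rho$ induces an isomorphism on cohomology. The main obstacle is the unboundedness of the filtration: for each fixed total degree, $F$ on $B^k$ is exhaustive and bounded above, but in total degree $3$ there are infinitely many nonzero terms $E_1^{p,q} = M(f)_{(q+1)N-3}$, which stabilize to $\tau(C)$ as $q\to\infty$, so infinitely many classes of high pole order must be annihilated by the higher differentials $d_r$ before reaching the finite-dimensional group $H^2(U)$. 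Controlling this is exactly the content of the convergence statement in \cite[Chapter 6]{Dbk}: one verifies that the pole-order (i.e. $F$-)filtration is complete and exhaustive and that comparison with the finite-dimensional rational de Rham cohomology forces $E_\infty$ to be the associated graded of $H^{*-1}(U)$ for $P$. Once this is in place, the identification of the induced filtration with $P$ is immediate from the pole-order bookkeeping established for $\rho$.
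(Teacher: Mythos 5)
Your outline is correct and follows exactly the route of the source the paper itself relies on: the paper gives no proof of this proposition, citing only \cite[Chapter 6]{Dbk}, and your construction (the contraction map $\omega\mapsto \Delta\omega/f^{t+1}$, the Cartan/Euler computation making it a filtered chain map, the column filtration giving the Koszul $E_1$-term, and the deferral of the nontrivial convergence/exhaustiveness issue to that same reference) is precisely the standard argument found there. The one point worth flagging is the indexing convention: with the paper's usage ($E_\infty^{2,1}=P^1H^2(U)/P^2H^2(U)$) the abutment filtration is $E_\infty^{p,q}\simeq Gr_P^{p-1}H^{p+q-1}(U)$, consistent with your pole-order count $k-p+1$ on $F^pB^k$, so your bookkeeping is right once that shift is made explicit.
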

In the case of a curve $C\subset \mathbb{P}^2$ with isolated singularities, the only nontrivial cohomology groups of the Koszul complex are $H^2(K^*(f))$ and $H^3(K^*(f))$. Therefore, the nonzero terms of the $E_1$-spectral sequence belong to the lines $p+q=2$ and $p+q=3$. For the terms on the line $p+q=3$, we have 
$$\dim E_1^{p,q}(f)=\dim H^3(K^*(f))_{(q+1)N}=\dim M(f)_{(q+1)N-3}.$$
 For the terms of the line $p+q=2$, 
$$\dim E_1^{p,q}(f)=\dim H^2(K^*(f))_{(q+1)N}=\dim M(f)_{(q+2)N-3}-\dim M(f_s)_{(q+2)N-3}$$ (see \cite{Dimca}).

\section{Curves with $A_1$ and $D_4$ singularities}

We can now state our second main result.

\begin{theorem}\label{result2}
Let $C\in \mathbb{P}^2$ be a curve of degree $N$. Suppose $C$ has $n$ nodes $(A_1)$ and $t$ triple points $(D_4)$ and no other singularities. Let $C=\bigcup_{j=1,r}C_j$ be the decomposition of $C$ as a union of irreducible components, let $\nu_j: \tilde{C}_j \rightarrow C_j$ be the normalization mappings and set $g_j=g(\tilde{C}_j)$. Then we have the following.
\begin{enumerate}[(A)] 
\item $0 \leq \dim M(f)_{2N-3}-\tau(C) \leq \sum_{j=1}^rg_j,$ where $\tau(C)$ is the sum of all Tjurina numbers of the singularities of $C$.
Moreover, the equality $\dim M(f)_{2N-3}-\tau(C)=\sum_{j=1}^rg_j$ holds if and only if the Hodge filtration $F$ and the pole order filtration $P$ on $H^2(U)$ satisfy $F^2H^2(U)=P^2H^2(U)$.
 In particular, 
if all $g_i=0$, one has $\dim M(f)_{2N-3}=\tau(C)$, i.e. $st(C)\leq 2N-3$ and $F^2H^2(U)=P^2H^2(U)$.
\item $max(r-1+t-\sum_{j=1}^r g_j,r-1)\leq \dim ER(f)_{N-2} \leq r-1+t$. In particular, $\dim ER(f)_{N-2}=r-1+t$ if $g_j=0$ for all $j$.
\end{enumerate}
\end{theorem}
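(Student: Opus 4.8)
The plan is to reduce both parts to a single two–sided estimate for the integer $\dim M(f)_{2N-3}-\tau(C)$, and then to read that integer off from the Hodge and pole order filtrations on $H^2(U)$.

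\emph{Step 1 (bookkeeping).} Since $k=N$ lies in the range $2\le k\le 2N-3$, equation \eqref{ER} gives $\dim ER(f)_{N-2}=\dim H^2(K^*(f))_N$. Applying the identity $\dim H^2(K^*(f))_{2N-3-k}=\dim M(f)_{3N-6-k}-\dim M(f_s)_k$ with $k=N-3$, and using $(J_{f_s})_{N-3}=0$ so that $\dim M(f_s)_{N-3}=\dim S_{N-3}=\tfrac{(N-1)(N-2)}{2}$, I obtain
$$\dim ER(f)_{N-2}=\dim M(f)_{2N-3}-\tfrac{(N-1)(N-2)}{2}.$$
Running the arithmetic genus and B\'ezout computation already carried out in the proof of Theorem \ref{result1} gives $p_a(C)=\tfrac{(N-1)(N-2)}{2}=\sum_{j}g_j+(n+3t)-(r-1)$, whence, with $\tau(C)=n+4t$,
$$\tau(C)-\tfrac{(N-1)(N-2)}{2}=(r-1)+t-\sum_{j}g_j .$$
Adding the two displays yields
$$\dim ER(f)_{N-2}=\bigl(\dim M(f)_{2N-3}-\tau(C)\bigr)+(r-1)+t-\sum_{j}g_j .$$
In particular the outer bounds of (B) are exactly the bounds of (A): the statement $0\le \dim M(f)_{2N-3}-\tau(C)\le \sum_j g_j$ is equivalent to $(r-1)+t-\sum_j g_j\le \dim ER(f)_{N-2}\le (r-1)+t$.

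\emph{Step 2 (Hodge theory, heart of (A)).} By Theorem \ref{result1} one has $\dim F^2H^2(U)=\tfrac{(N-1)(N-2)}{2}-t$ and $\dim \mathrm{gr}^1_FH^2(U)=\sum_j g_j$, while $F^1=H^2(U)$ by \eqref{hodgefilt}; moreover $F^2\subseteq P^2$. Using the spectral sequence of Proposition \ref{specsq}, whose filtration is $P$, I would identify $\mathrm{gr}^2_PH^2(U)=E_\infty^{2,1}$ as a quotient of $E_1^{2,1}=H^3(K^*(f))_{2N}=M(f)_{2N-3}$, check that $P^1=H^2(U)$, and establish the comparison identity
$$\dim M(f)_{2N-3}-\tau(C)=\dim \mathrm{gr}^1_PH^2(U).$$
Granting this, the lower bound in (A) is immediate since $\dim\mathrm{gr}^1_P\ge0$. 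Because $F^1=P^1=H^2(U)$ and $F^2\subseteq P^2$, the identity on $H^2(U)$ induces a surjection $\mathrm{gr}^1_FH^2(U)\twoheadrightarrow \mathrm{gr}^1_PH^2(U)$, so
$$\dim M(f)_{2N-3}-\tau(C)=\dim\mathrm{gr}^1_P\le \dim\mathrm{gr}^1_F=\sum_j g_j ,$$
which is the upper bound, and $\dim P^2-\dim F^2=\dim\mathrm{gr}^1_F-\dim\mathrm{gr}^1_P$. Hence the equality $\dim M(f)_{2N-3}-\tau(C)=\sum_j g_j$ holds if and only if $\dim P^2=\dim F^2$, i.e. $F^2H^2(U)=P^2H^2(U)$. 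I expect this comparison identity to be the main obstacle: it requires computing the differentials of the pole order spectral sequence into bidegrees $(2,1)$ and $(1,2)$, and it is precisely here that the phenomenon $F\ne P$ — absent in the nodal case of \cite{DSt} — must be measured quantitatively.

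\emph{Step 3 (the bound $\dim ER(f)_{N-2}\ge r-1$ and the special cases).} Writing $f=f_1\cdots f_r$, put $\omega_j=df_j\wedge d(f/f_j)\in\Omega^2_N$. From $df=\tfrac{f}{f_j}\,df_j+f_j\,d(f/f_j)$ one gets $df\wedge\omega_j=0$, so each $\omega_j$ is a relation with coefficients in $S_{N-2}$. Expanding $\omega_j=\sum_{k\ne j}\tfrac{f}{f_jf_k}\,df_j\wedge df_k$ shows that $\sum_{j=1}^r\omega_j=0$ is their only linear relation, so they span an $(r-1)$-dimensional subspace; since the trivial (Koszul) relations first appear in degree $N-1$, we have $ER(f)_{N-2}=AR(f)_{N-2}$, and these are genuine essential relations, giving $\dim ER(f)_{N-2}\ge r-1$. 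Finally, if all $g_j=0$ the interval in (A) collapses to $\{0\}$, so $\dim M(f)_{2N-3}=\tau(C)$ (hence $st(C)\le 2N-3$) and $F^2H^2(U)=P^2H^2(U)$; correspondingly $\dim ER(f)_{N-2}=(r-1)+t$ in (B).
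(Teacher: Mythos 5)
Your Step 1 reduction is sound: the identity $\dim ER(f)_{N-2}=\dim M(f)_{2N-3}-\tfrac{(N-1)(N-2)}{2}$ (from \eqref{ER} and the displayed formula for $\dim H^2(K^*(f))_{2N-3-k}$ with $k=N-3$) and the genus count $\tau(C)-\tfrac{(N-1)(N-2)}{2}=(r-1)+t-\sum_j g_j$ are both correct, and together they do make the outer bounds of (B) equivalent to the bounds of (A); this is exactly the bookkeeping the paper performs, there packaged as two computations of $b_2(U)$. Your Step 3 is a genuinely different and more constructive route to $\dim ER(f)_{N-2}\ge r-1$: the paper reads this off from $E_\infty^{2,0}=\ker d_1^0\simeq H^1(U)$, whereas your explicit syzygies $\omega_j=df_j\wedge d(f/f_j)$ exhibit the $(r-1)$-dimensional subspace directly (you should still verify, e.g.\ by reduction modulo each $f_j$, that $\sum_j\lambda_j\omega_j=0$ forces all $\lambda_j$ equal).

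The genuine gap is the ``comparison identity'' of Step 2, which you explicitly leave unproved and which is the heart of part (A) --- and hence, by your own Step 1, of the outer bounds of (B) as well. What is needed is precisely what the paper imports from \cite{DSt}: (i) the differential $d_1^1:E_1^{1,1}\to E_1^{2,1}$ of the spectral sequence of Proposition \ref{specsq} is injective (Theorem 2.4(ii) in \cite{DSt}); (ii) $\dim E_1^{1,1}=\dim H^2(K^*(f))_{2N}=\dim ER(f)_{2N-2}=\tau(C)$ by \eqref{ER}, while $\dim E_1^{2,1}=\dim M(f)_{2N-3}$; and (iii) the spectral sequence degenerates at $E_2$ (Proposition 2.4(iii) in \cite{DSt}), so that $E_\infty^{2,1}=\operatorname{coker} d_1^1$ has dimension exactly $\dim M(f)_{2N-3}-\tau(C)$. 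Without (i)--(iii) you have neither the lower bound $\dim M(f)_{2N-3}\ge\tau(C)$ nor the identification $\dim M(f)_{2N-3}-\tau(C)=\dim Gr^1_PH^2(U)$ on which your upper bound and the equivalence with $F^2H^2(U)=P^2H^2(U)$ rest. There is also an indexing slip: $E_\infty^{2,1}=P^1H^2(U)/P^2H^2(U)=Gr^1_PH^2(U)$, not $Gr^2_P$ as you first write, although your subsequent formulas use the correct $Gr^1_P$. Once these three inputs are supplied, the remainder of your Step 2 --- the surjection $Gr^1_FH^2(U)\twoheadrightarrow Gr^1_PH^2(U)$ coming from $F^1=P^1=H^2(U)$ and $F^2\subseteq P^2$, together with Theorem \ref{result1}, and the resulting equality criterion --- coincides with the paper's argument.
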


\begin{proof}
$(A)$ Consider the spectral sequence of Proposition \ref{specsq} 
$$E_1^{p,q}(f)=H^{p+q}(K^*(f))_{(q+1)N}$$
that converges to $H^{p+q-1}(U)$.
By Theorem 2.4 (ii) in \cite{DSt}, the differential 
$$d_1^t:E_1^{2-t,t}\rightarrow E_1^{3-t,t}$$ is bijective for $t\geq 2$ and injective for $t=1$.\\
Consider first the case when $t=1$. Since $d_1:E^{1,1}_1\rightarrow E^{2,1}_1$ is injective, then $\dim E^{1,1}\leq \dim E^{2,1}_1$. Moreover $\dim E_1^{1,1}=\dim H^2(K^*(f))_{2N}=\dim ER(f)_{2N-2}$ which is equal to $\tau(C)$ by Equation \ref{ER}. On the other hand, $\dim E^{2,1}_1(f)=\dim H^3(K^*(f))_{2N}=\dim M(f)_{2N-3}$. This proves the left hand side inequality in (A).\\
To prove the right hand side inequality, consider the limit term $$E_{\infty}^{2,1}=\frac{P^1H^2(U)}{P^2H^2(U)}.$$
It is known that  $P^sH^m(U) \supset F^sH^m(U)$, see \cite[Chapter 6]{Dbk}. For $s=1$ we have in addition $F^1H^2(U)=H^2(U)$, then  $P^1H^2= F^1H^2(U)=H^2(U)$. For $s=2$, $P^2H^2\supset F^2H^2(U)\supset H^2(U)$. It follows that the map $$Gr^1_F(H^2(U))\longrightarrow E^{2,1}_{\infty}$$ is an epimorphism, and hence, $\dim E^{2,1}_{\infty}\leq \dim Gr^1_F(H^2(U))$. 

By Theorem \ref{result1} $\dim Gr^1_F(H^2(U))=\sum_{j=1}^r g_j$. On the other hand, by Proposition 2.4 (iii) in \cite{DSt} the spectral sequence degenerates at the $E_2$ terms, i.e. $E_{\infty}^{2,1}=E_2^{2,1}=coker d_1^1$. Hence, $\dim E_{\infty}^{2,1}=\dim M(f)_{2N-3}-\tau(C)$, and this proves the inequality in $(A)$.\\
\textit{(B)} To prove the second inequality, we consider the differential $d_1^0:E_1^{2,0}\rightarrow E_1^{3,0}.$ We know that $E_{\infty}^{2,0}=E^{2,0}_2=\ker\ d_1^0 \simeq H^1(U)$ and hence $\dim (\ker\ d_1^0)=r-1$. In particular, $r-1 \leq E_1^{2,0}=\dim H^2(K^*(f))_N=\dim ER(f)_{N-2}$, and $\dim E_{\infty}^{3,0}=\dim (coker\ d_1^0)=g-\dim Er(f)_{N-2}+r-1$, where $g=\frac{(N-1)(N-2)}{2}=\dim M(f)_{N-3}.$ We compute now $b_2(U)$ in two different ways:
$$b_2(U)=\dim Gr^1_FH^2(U)+\dim Gr^2_FH^2(U)=\sum_{j=1}^r g_j+g-t,$$
by Theorem \ref{result1}, and 
$$b_2(U)=\dim E^{3,0}_{\infty}+\dim E^{2,1}_{\infty}=g-\dim ER(f)_{N-2}+r-1+\dim M(f)_{N-3}-\tau.$$
On the other hand, Theorem 1 in \cite{Dimca4} implies
$$\dim Er(f)_{N-2}=\dim M(f)_{2N-3}-\dim M(f_s)_{N-3}=\dim M(f)_{2N-3}-g.$$
The last two formulas imply that $b_2(U)=2g-\tau+r-1$. The first formula for $b_2(U)$ now implies that $\sum_{j=1}^rg_j-t=2g-\tau+r-1$. If we apply part $(A)$ of the theorem we get
$$\tau-g\leq \dim ER(f)_{N-2}\leq \tau+\sum g_j-g,$$
and this proves part $(B)$.
\end{proof}

\begin{example} (i) Let $C$ be the degree 5 curve defined by $f=xy(x+y)z^2+x^5+2y^5=0$. In this example, $C$ is an irreducible curve with exactly one triple point, and hence  $g_1=3$. A Singular computation gives $\dim M(f)_{7}=6$. This gives a strict inequality in Theorem \ref{result2} part (A), i.e. $$\dim M(f)_{7}-\tau(C)=2<3= g_1.$$
Moreover, the inequalities of part (B) of the theorem are
$$0\leq 0 \leq 1.$$

(ii) Let $C$ be the degree 9 curve defined by $f=(x^3+y^3+z^3)^3+(x^3+2y^3+3z^3)^3=0$. In this example, $C$ is a union of 3 smooth curves with $g_i=1$ for $i=1,2,3$. It has 9 ordinary triple points, and $\dim M(f)_{16}=\tau(C)=36$. This gives a strict inequality in Theorem \ref{result2} part (A), i.e. $$\dim M(f)_{16}-\tau(C)=0<3=\sum_{i=1}^3 g_i.$$
Moreover, the inequalities of part (B) of the theorem are
$$8\leq 8 \leq 9+2=11.$$
\end{example}

\begin{remark}
Part $(A)$ of Theorem \ref{result2} can be regarded as a generalization of Corollary \ref{rational nodal curves}, and  of the equation \ref{eqt}, since for nodal curves we have $F^2H^2(U)=P^2H^2(U)$. The above example shows that this equality may fail for curves with ordinary double and triple points. The previously known examples of curves with $F^2H^2(U)\neq P^2H^2(U)$ involved curves with non-ordinary singularities, see Examples 3.2, 3.3 and 3.4 in \cite{DSt}.
\end{remark}

\begin{remark}
Part $(B)$ of Theorem \ref{result2} is a generalization of Theorem 4.1 in \cite{DSt}. In this  result, since for rational curves $\dim ER(f)_{N-2}=r-1+t$, then each irreducible component $C_j$ and each triple point $P\in C$ yield one relation, and there is only one dependence relation among them. So one can ask about the possibility to write these syzygies in terms of the point $P$ and the defining functions of $C_j$ as in Theorem 4.1 \cite{DSt}.
\end{remark}
The following example shows that Corollary \ref{rational nodal curves} does not hold even for line arrangements with double and triple points.

\begin{example} \label{Pappus}

Consider the following two distinct realizations of the Pappus configuration $9_3$, see \cite{CS} and \cite{Dbk}, Example (6.4.16), p. 213. The first one is the line arrangement
$$\A_1: f=xyz(x-y)(y-z)(x-y-z)(2x+y+z)(2x+y-z)(-2x+5y-z)=0.$$
A Singular computation yields
$$HP(M(f))(t)=1+3t+6t^2+10t^3+15t^4+21t^5+28t^6+36t^7+42t^8+46t^9+48t^{10}+48t^{11}+$$
$$+47t^{12}+45(t^{13}+...$$
The second one is the line arrangement $\A_2$ given by
$$\A_2: g=xyz(x+y)(x+3z)(y+z)(x+2y+z)(x+2y+3z)(4x+6y+6z)=0.$$
Using again the Singular software, we get
$$HP(M(g))(t)=1+3t+6t^2+10t^3+15t^4+21t^5+28t^6+36t^7+42t^8+46t^9+48t^{10}+48t^{11}+$$
$$+46t^{12}+45(t^{13}+...$$
Both arrangements have $N=n=t=9$ and $HP(M(f))(t)-HP(M(g))(t)=t^{12} \ne 0.$
This shows once again that the curves with nodes and triple points are much more subtle than the nodal curves.
It also shows that it is rather difficult to control the dimension of the homogeneous components $M(f)_r$ for 
$r \ne 2N-3$.

\end{example}

\small \textbf{Acknowledegment:}I gratefully acknowledge the support of the Lebanese National Council for Scientific Research, without which the present study could not have been completed.

\end{document}